\definecolor{dblue}{rgb}{0,0,.6}
\newtheorem{theorem}{Theorem}
\newtheorem{corollary}[theorem]{Corollary}
\newtheorem{lemma}[theorem]{Lemma}
\newtheorem{proposition}[theorem]{Proposition}
\newtheorem{question}[theorem]{Question}
\newcommand*{\pr}{\ensuremath{\mathrm{pr}}}
\newcommand*{\OO}{\ensuremath{\mathcal{O}}}
\newcommand*{\PP}{\ensuremath{\mathbb{P}}}
\newcommand*{\CC}{\ensuremath{\mathbb{C}}}
\newcommand*{\ZZ}{\ensuremath{\mathbb{Z}}}
\newcommand*{\QQ}{\ensuremath{\mathbb{Q}}}
\newcommand*{\hodge}[1]{\mathmakebox[1em][c]{#1}}
\begin{document}   

\title{The construction problem for Hodge numbers modulo an integer}
\author{Matthias Paulsen}
\address{Mathematisches Institut \\ Ludwig-Maximilians-Universität München \\ Theresienstr.~39 \\ D-80333~München \\ Germany}
\email{paulsen@math.lmu.de}
\author{Stefan Schreieder}
\address{Mathematisches Institut \\ Ludwig-Maximilians-Universität München \\ Theresienstr.~39 \\ D-80333~München \\ Germany}
\email{schreieder@math.lmu.de}
\keywords{Hodge numbers, Kähler manifolds, construction problem}
\subjclass[2010]{32Q15, 14C30, 14E99, 51M15}
\date{August 2, 2019}

\begin{abstract}
For any integer $m\ge2$ and any dimension $n\ge1$, we show that any $n$-dimensional Hodge diamond with values in $\ZZ/m\ZZ$
is attained by the Hodge numbers of an $n$-dimensional smooth complex projective variety.
As a corollary, there are no polynomial relations among the Hodge numbers of $n$-dimensional smooth complex projective varieties
besides the ones induced by the Hodge symmetries, which answers a question raised by Koll\'ar in 2012.
\end{abstract}

\maketitle

\section{Introduction} 

Hodge theory allows one to decompose the $k$-th Betti cohomology of an $n$-dimensional compact Kähler manifold $X$ into its $(p,q)$-pieces for all $0\le k\le 2n$:
\[ H^k(X,\CC) = \bigoplus_{\substack{p+q=k \\ 0\le p,q\le n}}H^{p,q}(X) \;,\quad \overline{H^{p,q}(X)}=H^{q,p}(X) \;. \]
The $\CC$-linear subspaces $H^{p,q}(X)$ are naturally isomorphic to the Dolbeault cohomology groups $H^q(X,\Omega_X^p)$.

The integers $h^{p,q}(X)=\dim_\CC H^{p,q}(X)$ for $0\le p,q\le n$ are called Hodge numbers. One usually arranges them in the so called Hodge diamond:
\[ \begin{matrix}
&&&& \hodge{h^{n,n}} &&&& \\
&&& \hodge{h^{n,n-1}} && \hodge{h^{n-1,n}} &&& \\
&& \hodge{\iddots} && \hodge{\vdots} && \hodge{\ddots} && \\
& \hodge{h^{n,1}} &&& &&& \hodge{h^{1,n}} & \\
\hodge{h^{n,0}} && \hodge{h^{n-1,1}} && \hodge{\cdots} && \hodge{h^{1,n-1}} && \hodge{h^{0,n}} \\
& \hodge{h^{n-1,0}} &&& &&& \hodge{h^{0,n-1}} & \\
&& \hodge{\ddots} && \hodge{\vdots} && \hodge{\iddots} && \\
&&& \hodge{h^{1,0}} && \hodge{h^{0,1}} &&& \\
&&&& \hodge{h^{0,0}} &&&&
\end{matrix} \]
The sum of the $k$-th row of the Hodge diamond equals the $k$-th Betti number.
We always assume that a Kähler manifold is compact and connected, so we have $h^{0,0}=h^{n,n}=1$.

Complex conjugation and Serre duality induce the symmetries
 \begin{align} \label{eq:HS}
h^{p,q}=h^{q,p}=h^{n-p,n-q}\ \ \text{for all}\ \ 0\le p,q\le n\; .
\end{align}
Additionally, we have the Lefschetz inequalities
\begin{align}\label{eq:Lineq}
h^{p,q}\le h^{p+1,q+1}\ \ \text{for}\ \ p+q<n\; .
\end{align}

While Hodge theory places severe restrictions on the geometry and topology of Kähler manifolds,
Simpson points out in \cite{simpson} that very little is known to which extent the theoretically possible phenomena actually occur.
This leads to the following construction problem for Hodge numbers:

\begin{question}\label{question:construction}
Let $(h^{p,q})_{0\le p,q\le n}$ be a collection of non-negative integers with $h^{0,0}=1$ obeying the Hodge symmetries \eqref{eq:HS} and the Lefschetz
inequalities \eqref{eq:Lineq}.
Does there exist a Kähler manifold $X$ such that $h^{p,q}(X)=h^{p,q}$ for all $0\le p,q\le n$?
\end{question}

After results in dimensions two and three (see e.\,g.\ \cite{hunt}), significant progress has been made by the second author \cite{schreieder}.
For instance, it is shown in \cite[Theorem~3]{schreieder} that the above construction problem is fully solvable for large parts of the Hodge diamond in arbitrary dimensions.
In particular, the Hodge numbers in a given weight $k$ may be arbitrary (up to a quadratic lower bound on $h^{p,p}$ if $k=2p$ is even) and so the outer Hodge numbers can be far larger than the inner Hodge numbers (see \cite[Theorem~1]{schreieder}), contradicting earlier expectations formulated in \cite{simpson}.
Weaker results with simpler proofs, concerning the possible Hodge numbers in a given weight, have later been obtained by Arapura \cite{arapura}.

In \cite{schreieder}, it was also observed that one cannot expect a positive answer to Question~\ref{question:construction} in its entirety.
For example, any $3$-dimensional Kähler manifold $X$ with $h^{1,1}(X)=1$ and $h^{2,0}(X)\ge1$ satisfies $h^{2,1}(X)<12^6\cdot h^{3,0}(X)$, see \cite[Proposition~28]{schreieder}.
Therefore, a complete classification of all possible Hodge diamonds of Kähler manifolds or smooth complex projective varieties
seems hopelessly complicated.

While these inequalities aggravate the construction problem for Hodge numbers,
one might ask whether there also exist number theoretic obstructions for possible Hodge diamonds.
For example, the Chern numbers of Kähler manifolds satisfy certain congruences due to integrality conditions
implied by the Hirzebruch--Riemann--Roch theorem.

For an arbitrary integer $m\ge2$, let us consider the Hodge numbers of a Kähler manifold in $\ZZ/m\ZZ$, which forces all inequalities to disappear.
The purpose of this paper is to show that Question~\ref{question:construction} is modulo~$m$ completely solvable
even for smooth complex projective varieties: 

\begin{theorem}\label{theorem:main}
Let $m\ge2$ be an integer.
For any integer $n\ge1$ and any collection of integers $(h^{p,q})_{0\le p,q\le n}$ such that $h^{0,0}=1$ and $h^{p,q}=h^{q,p}=h^{n-p,n-q}$ for $0\le p,q\le n$,
there exists a smooth complex projective variety $X$ of dimension~$n$ such that \[ h^{p,q}(X)\equiv h^{p,q}\pmod m \] for all $0\le p,q\le n$.
\end{theorem}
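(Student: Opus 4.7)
The plan is to start from a simple base variety whose Hodge diamond mod $m$ is explicit, and to successively modify it by blowing up carefully chosen smooth subvarieties until the desired residues are reached. The central tool is the blow-up formula: if $\tilde X$ is the blow-up of a smooth $n$-fold $X$ along a smooth subvariety $Z$ of codimension $c$, then
\[ h^{p,q}(\tilde X) = h^{p,q}(X) + \sum_{i=1}^{c-1} h^{p-i,q-i}(Z). \]
The key structural observation is that the increment $h^{p,q}(\tilde X) - h^{p,q}(X)$ depends only on Hodge numbers of $Z$ lying on the same antidiagonal $p-q$. Thus a blow-up modifies only one antidiagonal at a time, and within that antidiagonal only positions deeper in the diamond than the Hodge numbers of $Z$.

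The first main step is to reduce the theorem to the construction of suitable building blocks. By the Hodge symmetries \eqref{eq:HS} it suffices to prescribe $h^{p,q}$ in a fundamental domain, say $p\le q$ and $p+q\le n$. For each such position $(a,b)$ with $a<b$ and each residue $r\in\ZZ/m\ZZ$, I would aim to produce a smooth complex projective variety $Z^{a,b}_r$ of some dimension $d<n$ with $h^{a,b}(Z^{a,b}_r)\equiv r\pmod m$ and, critically, $h^{p,q}(Z^{a,b}_r)\equiv 0\pmod m$ for every other off-diagonal orbit in a suitable fundamental domain of its own diamond. Such building blocks can be assembled by induction on the dimension, combining products (which multiply Hodge polynomials), the refined constructions of \cite{schreieder} for individual weights, and auxiliary blow-ups that clean up unwanted entries modulo $m$.

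Given the building blocks, the construction proceeds as follows. Take $X_0=\PP^n$, whose diamond has $h^{p,p}=1$ and other entries zero. Embed each $Z^{a,b}_r$ in $X_0$ (or, more carefully, in a successive blow-up) with codimension $c=n-d$ chosen so that the top antidiagonal contribution from the blow-up formula lands exactly on the target orbit $\{(a+c-1,b+c-1)\}$ and its Hodge symmetric partners. Processing the off-diagonal orbits in a well-chosen order—outside-in along each antidiagonal, so that later blow-ups only affect strictly deeper orbits on the same antidiagonal—ensures that each adjustment modifies at most the target orbit mod $m$, possibly after subtracting already-accounted-for contributions. Finally the diagonal entries $h^{p,p}$ are corrected by blowing up low-dimensional subvarieties (points, curves, and so on); the effect vectors on the diagonal of blow-ups of varying dimensions form a triangular system modulo $m$, which suffices to hit any prescribed diagonal residues.

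The main obstacle is the construction and clean-up of the building blocks $Z^{a,b}_r$: Hodge numbers do not decouple in any given family, so producing varieties whose Hodge diamond is concentrated on a single orbit modulo $m$ requires an inductive bootstrap with very careful accounting. A secondary subtlety is that the Hodge symmetry $h^{p,q}=h^{n-p,n-q}$ couples each orbit with its antipodal one, so the blow-up prescriptions must be compatible with this reflection; here the freedom to work only modulo $m$ (so that inequalities vanish and overshoots are harmless) is what ultimately makes the bookkeeping go through.
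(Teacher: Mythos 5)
Your overall strategy for the \emph{inner} Hodge numbers --- repeated blow-ups, the additivity formula \eqref{eq:blowup}, and a triangular ordering of the positions so that each adjustment only disturbs positions that are processed later --- is essentially the strategy of the paper. But there are two genuine gaps, one of which is fatal to the construction as you set it up.

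First, the outer Hodge numbers. You start from $X_0=\PP^n$ and modify it only by blow-ups. The numbers $h^{p,0}(X)$ (equivalently $h^{0,q}(X)$) are birational invariants: in the blow-up formula the contribution to $h^{p,0}$ is $\sum_{i\ge1}h^{p-i,-i}(Z)=0$. Since $h^{p,0}(\PP^n)=0$ for $p\ge1$, no sequence of blow-ups of $\PP^n$ can realize, say, $h^{1,0}\equiv 3\pmod m$. Your fundamental domain $\{p\le q,\ p+q\le n\}$ contains the positions $(0,q)$, and your machinery simply cannot reach them. The paper deals with this by a separate induction on dimension (Proposition~\ref{proposition:outer}): the variety with prescribed $h^{p,0}$ mod $m$ is built as a hypersurface in $X_{n-1}\times E\times E$ for an elliptic curve $E$, with an Euler-characteristic bookkeeping via a very ample line bundle $L_n$ satisfying $\chi(L_n^{-1})\equiv1$. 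Some such non-birational input is unavoidable, and your proposal is missing it entirely.

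Second, even for the inner Hodge numbers, the crux of your argument is the existence of building blocks $Z^{a,b}_r$ whose Hodge diamond is concentrated modulo $m$ on a single symmetry orbit. You correctly identify this as ``the main obstacle'' but then defer it to ``an inductive bootstrap with very careful accounting,'' which is not a proof; constructing varieties with a single prescribed nonzero Hodge number mod $m$ is essentially as hard as the theorem itself. The paper circumvents this need: it first arranges (Lemma~\ref{lemma:subvariety}, by blowing up a point and then a $\PP^s$ inside the exceptional $\PP^{n-1}$, $m$ times) that $X$ contains $m$ disjoint projective bundles over $\PP^s$ without changing the Hodge numbers mod $m$, and then blows up projective bundles over hypersurfaces $Y_d\subset\PP^{s-r+1}$ of two different degrees ($d=s-r+2$ once and $d=1$ exactly $m-1$ times). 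The net effect mod $m$ is that of blowing up $Z\times\PP^{r-1}$ where $Z$ has Hodge diamond concentrated in its middle row with outer entries $1$ --- not a single orbit, but the total order $\prec$ on the primitive Hodge numbers $l^{p,q}=h^{p,q}-h^{p-1,q-1}$ makes the resulting system triangular anyway. Reformulating in terms of primitive Hodge numbers is also what tames the coupling $h^{p,q}=h^{n-p,n-q}$ that you flag as a ``secondary subtlety'': one checks directly that the antipodal contribution $h^{p-n+s-1,q-n+s-1}(Z\times\PP^{r-1})$ vanishes for all relevant $(p,q)$. So your skeleton is right, but the two load-bearing steps --- realizing the outer Hodge numbers and producing usable building blocks --- are exactly the parts left unproved.
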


Therefore, the Hodge numbers of Kähler manifolds do not follow any number theoretic rules, and the behaviour
of smooth complex projective varieties is the same in this aspect.

As a consequence of Theorem~\ref{theorem:main}, we show:

\begin{corollary}\label{corollary:poly}
Up to the Hodge symmetries \eqref{eq:HS}, there are no polynomial relations among the Hodge numbers of smooth complex projective varieties of the same dimension.
\end{corollary}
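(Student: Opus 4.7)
My plan is to deduce the corollary directly from Theorem~\ref{theorem:main}, using the elementary fact that a polynomial over $\QQ$ which vanishes on all of $\ZZ^N$ must be identically zero.

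I would first fix notation by letting $I$ denote the ideal in $\QQ[x^{p,q}]$, $0 \le p,q \le n$, generated by the Hodge symmetry relations from \eqref{eq:HS} together with $x^{0,0}-1$ (connectedness forces $h^{0,0}=1$). The quotient $R := \QQ[x^{p,q}]/I$ is a polynomial ring $\QQ[y_1,\ldots,y_N]$, where the $y_i$ correspond to a choice of orbit representatives for the symmetry action on the index set, omitting the orbit through $(0,0)$. A polynomial relation on Hodge numbers modulo the symmetries is then a nonzero element $\bar P\in R$ such that $\bar P((h^{p,q}(X)))=0$ for every smooth complex projective variety $X$ of dimension $n$; my goal is to show no such $\bar P$ exists.

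Suppose for contradiction that some $\bar P \neq 0$ did exist. After clearing denominators, I may assume $\bar P \in \ZZ[y_1,\ldots,y_N]$. Fix any integer tuple $(a_1,\ldots,a_N) \in \ZZ^N$ and any integer $m \geq 2$. Theorem~\ref{theorem:main} supplies a smooth complex projective variety $X$ of dimension $n$ whose free Hodge numbers satisfy $h_i(X) \equiv a_i \pmod m$. Since $\bar P$ vanishes on the Hodge numbers of $X$ by hypothesis, reducing the integer identity $\bar P((h_i(X)))=0$ modulo $m$ gives $\bar P(a_1,\ldots,a_N) \equiv 0 \pmod m$. Since $m$ was arbitrary, $\bar P(a_1,\ldots,a_N)=0$; since the tuple was arbitrary, $\bar P$ vanishes on all of $\ZZ^N$, and hence is identically zero, contradicting $\bar P\neq 0$.

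All of the substantive content is packed into Theorem~\ref{theorem:main} itself; once that theorem is in hand, no further geometric input is needed and no genuine obstacle arises. The only bookkeeping is the identification of $R$ with a polynomial ring in the free Hodge numbers, which is routine commutative algebra, and the standard density argument that integer-valued polynomial functions vanishing on $\ZZ^N$ are trivial.
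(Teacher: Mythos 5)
Your argument is correct and takes essentially the same route as the paper: the paper isolates the identical reduction as a separate lemma (applied to the set of attainable non-redundant quarter-diamonds), whereas you inline it, and your step of letting $m$ range over all integers to force $\bar P(a_1,\ldots,a_N)=0$ pointwise is just a rephrasing of the paper's choice of a single $m$ not dividing a putative nonzero value $f(z)$. The only point you gloss over is that the relation may a priori have complex (not rational) coefficients, so it cannot simply be ``cleared of denominators''; the paper first applies a $\QQ$-linear projection $\CC\to\QQ$ sending some nonzero coefficient to $1$ --- legitimate because the polynomial is only ever evaluated at integer points --- after which your argument applies verbatim.
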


In particular, there are no polynomial relations in the strictly larger
class of Kähler manifolds, which was a question raised by Koll\'ar after a colloquium talk of Kotschick at the University of Utah in fall 2012.
For linear relations among Hodge numbers, this question was settled in work of Kotschick and the second author in \cite{kotschick-schreieder}.

We call the Hodge numbers $h^{p,q}(X)$ with $p\in\{0,n\}$ or $q\in\{0,n\}$ (i.\,e.\ the ones placed on the border of
the Hodge diamond) the \emph{outer Hodge numbers} of $X$ and the remaining ones the \emph{inner Hodge numbers}.
Note that the outer Hodge numbers are birational invariants and are thus determined by the birational equivalence class of $X$.

Our proof shows (see Theorem~\ref{theorem:inner} below) that any smooth complex projective variety is birational
to a smooth complex projective variety with prescribed inner Hodge numbers in $\ZZ/m\ZZ$.
As a corollary, there are no polynomial relations among the inner Hodge numbers within a given birational equivalence class.
This is again a generalization of the corresponding result for linear relations obtained in \cite[Theorem~2]{kotschick-schreieder}.

The proof of Theorem~\ref{theorem:main} can thus be divided into two steps:
First we solve the construction problem modulo~$m$ for the outer Hodge numbers.
This is done in Section~\ref{section:outer}.
Then we show the aforementioned result that the inner Hodge numbers can be adjusted arbitrarily in $\ZZ/m\ZZ$
via birational equivalences (in fact, via repeated blow-ups).
This is done in Section~\ref{section:inner}.
Finally, in Section~\ref{section:poly} we deduce that no non-trivial polynomial relations between Hodge numbers exist,
thus answering Koll\'ar's question.

\section{Outer Hodge numbers}\label{section:outer}

We prove the following statement via induction on the dimension $n\ge1$.
\begin{proposition}\label{proposition:outer}
For any collection of integers $(h^{p,0})_{1\le p\le n}$, there exists a smooth complex projective variety $X_n$ of dimension~$n$
together with a very ample line bundle $L_n$ on $X_n$ such that \[ h^{p,0}(X_n)\equiv h^{p,0}\pmod m \] for all $1\le p\le n$ and
\[ \chi(L_n^{-1})\equiv1\pmod m \;. \]
\end{proposition}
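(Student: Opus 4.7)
The plan is to induct on $n$. For the base case $n=1$, I would take $X_1$ to be a smooth projective curve of genus $g\equiv h^{1,0}\pmod m$ (with $g\ge 0$) and $L_1$ a very ample line bundle of degree $d\ge 2g+1$ with $d\equiv -g\pmod m$, so that Riemann--Roch yields $\chi(L_1^{-1})=1-g-d\equiv 1\pmod m$.

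For the inductive step, I would first apply the proposition in dimension $n-1$ to the truncated data $(h^{p,0})_{1\le p\le n-1}$ to obtain a pair $(X_{n-1},L_{n-1})$. Then set $W:=X_{n-1}\times\PP^1\times\PP^1$ and let $X_n$ be a general, hence smooth, member of the very ample linear system $|L_{n-1}\boxtimes\OO_{\PP^1}(b)\boxtimes\OO_{\PP^1}(c)|$ for positive integers $b,c$ to be chosen. Since $\PP^1$ has no holomorphic differentials, K\"unneth gives $h^{p,0}(W)=h^{p,0}(X_{n-1})$ for every $p$, and the Lefschetz hyperplane theorem yields $h^{p,0}(X_n)=h^{p,0}(W)=h^{p,0}(X_{n-1})$ for $1\le p\le n-1$, already matching the target modulo $m$ by the inductive hypothesis. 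For the remaining Hodge number, the ideal sheaf sequence
\[ 0\to\OO_W(-X_n)\to\OO_W\to\OO_{X_n}\to 0, \]
combined with K\"unneth and the inductive congruence $\chi(L_{n-1}^{-1})\equiv 1\pmod m$, gives
\[ \chi(\OO_{X_n})\equiv\chi(\OO_{X_{n-1}})-(1-b)(1-c)\pmod m, \]
so that $h^{n,0}(X_n)\pmod m$ is an affine function of $c$ once $b=2$ is fixed; choosing $c$ in the appropriate residue class modulo $m$ then forces $h^{n,0}(X_n)\equiv h^{n,0}\pmod m$.

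It then remains to produce a very ample $L_n$ on $X_n$ with $\chi(L_n^{-1})\equiv 1\pmod m$. The natural candidates are restrictions $L_n=(L_{n-1}\boxtimes\OO_{\PP^1}(b')\boxtimes\OO_{\PP^1}(c'))|_{X_n}$ for $b',c'\ge 1$, and the same ideal sheaf sequence gives
\[ \chi(L_n^{-1})\equiv (1-b')(1-c')-\chi(L_{n-1}^{-2})\,(1-b-b')(1-c-c')\pmod m. \]
The inductive hypothesis controls $\chi(L_{n-1}^{-1})\pmod m$ but says nothing about $\chi(L_{n-1}^{-2})\pmod m$, which may be any residue. The hard part of the proof will therefore be this arithmetic step: showing that by varying $b',c'\ge 1$ and by shifting $b,c$ within their residue classes modulo $m$ (which is allowed, since the Hodge-number congruences only constrain $(b,c)\bmod m$), one can always force the displayed quantity to equal $1$ modulo $m$, uniformly in the unknown residue of $\chi(L_{n-1}^{-2})$. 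I expect the construction and the Hodge-number computation above to be routine, and this congruence-solvability to be the only genuinely delicate step of the induction.
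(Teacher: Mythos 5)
Your overall architecture matches the paper's: induct on $n$, take a product of $X_{n-1}$ with a surface, cut out $X_n$ by a general section of a very ample product bundle, use Lefschetz plus K\"unneth for $h^{p,0}$ with $p\le n-1$, and use the ideal sheaf sequence to control $\chi(\OO_{X_n})$ and $\chi(L_n^{-1})$. Your choice of $\PP^1\times\PP^1$ even simplifies the bookkeeping for $h^{p,0}$, since $\PP^1$ contributes nothing to the Hodge numbers $h^{p,0}$, whereas the paper (which uses $E\times E$ for an elliptic curve $E$) must pre-correct the targets by a recursion $k^{p,0}=h^{p,0}-2k^{p-1,0}-k^{p-2,0}$. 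However, the step you defer as ``genuinely delicate'' is not merely delicate: it is unsolvable within the family of candidates you propose. The obstruction is exactly the uncontrolled residue $t=\chi(L_{n-1}^{-2})\bmod m$. Take $m=2$. The Hodge-number step forces $(1-b)(1-c)\equiv\rho\pmod 2$ where $\rho$ is determined by the prescribed $h^{p,0}$; if $\rho$ is odd then $b$ and $c$ must both be even, so modulo $2$ your expression collapses to
\[ (1-b')(1-c')-t\,(1-b-b')(1-c-c')\equiv(1-b')(1-c')\,(1-t)\pmod 2\;, \]
which is $\equiv 0$ whenever $t$ is odd, and hence can never equal $1$. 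Both conditions occur: e.g.\ for $n=2$, $h^{1,0}\equiv 0$, $h^{2,0}\equiv 1\pmod 2$, the base case forces $g$ even, so $\chi(L_1^{-2})=1-g-2d\equiv 1\pmod 2$, i.e.\ $t\equiv 1$, while $\rho\equiv -h^{2,0}\equiv 1$. Also note that ``shifting $b,c$ within their residue classes modulo $m$'' gives you nothing, since the quantity you are trying to control only depends on $b,c,b',c'$ modulo $m$.

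The missing idea is to make the coefficient of $\chi(L_{n-1}^{-2})$ vanish identically modulo $m$, and this is why the paper uses $E\times E$ rather than $\PP^1\times\PP^1$: on an elliptic curve $\chi$ of a line bundle equals its degree (there is no additive constant $1$ as for $\OO_{\PP^1}(k)$), so taking $P_n=\pr_1^*L_{n-1}\otimes\pr_2^*L^{m-1}\otimes\pr_3^*L^{e}$ and $Q_n=\pr_1^*L_{n-1}\otimes\pr_2^*L\otimes\pr_3^*L$ makes the troublesome term $\chi(L_{n-1}^{-2})\,\chi(L^{-m})\,\chi(L^{-e-1})$ contain the factor $\chi(L^{-m})=-m\deg L\equiv 0\pmod m$, killing $\chi(L_{n-1}^{-2})$ regardless of its value; the factor $\chi(\OO_E)=0$ likewise decouples $\chi(\OO_{X_n})$ from $\chi(\OO_{X_{n-1}})$. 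On $\PP^1$ the analogous factor is $1-b-b'$ with $b,b'\ge 1$, and forcing it $\equiv 0\pmod m$ is incompatible (for even $m$ and odd $\rho$) with the constraint $(1-b)(1-c)\equiv\rho$ together with $(1-b')(1-c')\equiv 1$, as the computation above shows. So either adopt the elliptic-curve version, or strengthen your induction hypothesis to control $\chi(L_{n-1}^{-k})$ for all $k$ --- the latter leads to an infinite regress that would itself need a new idea.
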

\begin{proof}
We take $X_1$ to be a curve of genus~$g$ where $g\equiv h^{1,0}\pmod m$.
Further, we take $L_1$ to be a line bundle of degree~$d$ on $X_1$ where $d>2g$ and $d\equiv -g\pmod m$.
Then $L_1$ is very ample and by the Riemann--Roch theorem we have $\chi(L_1^{-1})\equiv1\pmod m$.

Now let $n>1$.
We define a collection of integers $(k^{p,0})_{-1\le p\le n-1}$ recursively via
\[ k^{-1,0}=0\;,\quad k^{0,0}=1\;,\quad k^{p,0}=h^{p,0}-2k^{p-1,0}-k^{p-2,0} \text{ for } 1\le p\le n-1 \;. \]
We choose $X_{n-1}$ and $L_{n-1}$ by induction hypothesis such that $h^{p,0}(X_{n-1})\equiv k^{p,0}\pmod m$ for all $1\le p\le n-1$.

Let $E$ be a smooth elliptic curve and let $L$ be a very ample line bundle of degree~$d$ on $E$ such that $d\equiv1\pmod m$.
Let $e$ be a positive integer such that \[ e\equiv1+\sum_{p=1}^n(-1)^ph^{p,0}\pmod m \;. \]
Let $X_n\subset X_{n-1}\times E\times E$ be a hypersurface defined by a general section of the very ample line bundle
\[ P_n = \pr_1^*L_{n-1}\otimes\pr_2^*L^{m-1}\otimes\pr_3^*L^e \] on $X_{n-1}\times E\times E$.
By Bertini's theorem, we may assume $X_n$ to be smooth and irreducible.
Let $L_n$ be the restriction to $X_n$ of the very ample line bundle
\[ Q_n = \pr_1^*L_{n-1}\otimes\pr_2^*L\otimes\pr_3^*L \] on $X_{n-1}\times E\times E$.
Then $L_n$ is again very ample.

By the Lefschetz hyperplane theorem, we have
\[ h^{p,0}(X_n)=h^{p,0}(X_{n-1}\times E\times E) \] 
for all $1\le p\le n-1$. Since the Hodge diamond of $E\times E$ is
\[ \begin{matrix} &&1&& \\ &2&&2& \\ 1&&4&&1 \\ &2&&2& \\ &&1&& \end{matrix} \;, \]
Künneth's formula yields
\begin{align*}
h^{p,0}(X_n) &= h^{p,0}(X_{n-1})+2h^{p-1,0}(X_{n-1})+h^{p-2,0}(X_{n-1}) \\
&\equiv k^{p,0}+2k^{p-1,0}+k^{p-2,0} \pmod m \\
&= h^{p,0}
\end{align*}
for all $1\le p\le n-1$. Therefore, it only remains to show that $h^{n,0}(X_n)\equiv h^{n,0}\pmod m$ and $\chi(L_n^{-1})\equiv1\pmod m$.
Since \[ \chi(\OO_{X_n})=1+\sum_{p=1}^n(-1)^ph^{p,0}(X_n) \;, \] the congruence $h^{n,0}(X_n)\equiv h^{n,0}\pmod m$
is equivalent to $\chi(\OO_{X_n})\equiv e\pmod m$.

By definition of $X_n$, the ideal sheaf on $X_{n-1}\times E\times E$ of regular functions vanishing on $X_n$
is isomorphic to the sheaf of sections of the dual line bundle $P_n^{-1}$. Hence, there is a short exact sequence
\begin{equation}
0 \to P_n^{-1} \to \OO_{X_{n-1}\times E\times E} \to i_*\OO_{X_n} \to 0 \label{eq:ses}
\end{equation}
of sheaves on $X_{n-1}\times E\times E$ where $i\colon X_n\to X_{n-1}\times E\times E$ denotes the inclusion.
Together with Künneth's formula and the Riemann--Roch theorem, we obtain
\begin{align*}
\chi(\OO_{X_n}) &= \chi(\OO_{X_{n-1}\times E\times E}) - \chi(P_n^{-1}) \\
&= \chi(\OO_{X_{n-1}})\underbrace{\chi(\OO_E)^2}_{=0} - \underbrace{\chi(L_{n-1}^{-1})}_{\equiv1}\underbrace{\chi(L^{1-m})}_{\equiv1}\underbrace{\chi(L^{-e})}_{\equiv-e} \\
&\equiv e \pmod m \;.
\end{align*}
Tensoring \eqref{eq:ses} with $Q_n^{-1}$ yields the short exact sequence
\[ 0 \to P_n^{-1}\otimes Q_n^{-1} \to Q_n^{-1} \to i_*i^*Q_n^{-1} \to 0 \]
and thus
\begin{align*}
\chi(L_n^{-1}) &= \chi(Q_n^{-1}) - \chi(P_n^{-1}\otimes Q_n^{-1}) \\
&= \underbrace{\chi(L_{n-1}^{-1})}_{\equiv1}\underbrace{\chi(L^{-1})^2}_{\equiv1} - \chi(L_{n-1}^{-2})\underbrace{\chi(L^{-m})}_{\equiv0}\chi(L^{-e-1}) \\
&\equiv 1 \pmod m \;.
\end{align*}
This finishes the induction step.
\end{proof}

\section{Inner Hodge numbers}\label{section:inner}

We now show the following result, which significantly improves \cite[Theorem~2]{kotschick-schreieder}.

\begin{theorem}\label{theorem:inner}
Let $X$ be a smooth complex projective variety of dimension~$n$ and let $(h^{p,q})_{1\le p,q\le n-1}$ be any collection of integers
such that $h^{p,q}=h^{q,p}=h^{n-p,n-q}$ for $1\le p,q\le n-1$. Then $X$ is birational to a smooth complex projective variety $X'$
such that \[ h^{p,q}(X')\equiv h^{p,q}\pmod m \] for all $1\le p,q\le n-1$.
\end{theorem}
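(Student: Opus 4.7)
The plan is to produce $X'$ from $X$ by a sequence of blow-ups along smooth closed subvarieties, using the standard blow-up formula for Hodge numbers. Recall that if $\sigma\colon\tilde X \to X$ is the blow-up of $X$ along a smooth closed subvariety $Z$ of codimension $c$, then
\[ h^{p,q}(\tilde X) = h^{p,q}(X) + \sum_{i=1}^{c-1} h^{p-i,q-i}(Z), \]
with the convention $h^{a,b}(Z)=0$ outside $0\le a,b\le\dim Z$. As must be the case, no outer Hodge number is altered. In the critical codimension $c=2$, the formula reduces to $h^{p,q}(\tilde X)=h^{p,q}(X)+h^{p-1,q-1}(Z)$: blowing up along a smooth $(n-2)$-dimensional subvariety injects the entire Hodge diamond of $Z$, shifted by $(1,1)$, into the inner Hodge diamond of $X$.

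I would argue by induction on $n$. The base case $n=2$ is immediate: the only inner position is $(1,1)$, incremented by one each time $X$ is blown up at a point. For the inductive step ($n\ge3$), assume Theorem~\ref{theorem:main}---the combination of Proposition~\ref{proposition:outer} and the present theorem---in all dimensions strictly less than $n$. Writing $\Delta^{p,q}:=h^{p,q}-h^{p,q}(X)$ for the desired increments on inner positions, apply Theorem~\ref{theorem:main} in dimension $n-2$ to the diamond $(\Delta^{a+1,b+1})_{0\le a,b\le n-2}$ (whose Hodge symmetries $h^{a,b}=h^{b,a}=h^{n-2-a,n-2-b}$ are inherited from those required of $\Delta^{p,q}$) to produce a smooth complex projective variety $W$ of dimension $n-2$ with $h^{a,b}(W)\equiv\Delta^{a+1,b+1}\pmod m$ for all $0\le a,b\le n-2$.

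If $W$ can be realized as a smooth codimension-$2$ subvariety of some smooth projective variety $X''$ obtained from $X$ by preliminary blow-ups whose net effect on the inner Hodge numbers is trivial modulo $m$---or else absorbable by re-tuning the target diamond of $W$---then blowing up $X''$ along this copy of $W$ produces the required $X'$. The main obstacle is thus this embedding step: realizing an abstractly given $(n-2)$-dimensional smooth projective variety as a smooth codimension-$2$ subvariety of some blow-up of $X$. Note that an abstract embedding $W\hookrightarrow\PP^N$ is easy, but since every exceptional divisor produced from $X$ has dimension $n-1$, one cannot in general hope to fit $\PP^N$ for $N\ge 2n-3$ inside a blow-up of $X$.

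To circumvent this, I would avoid producing $W$ purely abstractly and then trying to embed it. Instead, I would construct $W$ directly inside a blow-up of $X$, as an iterated hypersurface or complete intersection, in the spirit of the construction of $X_n$ inside $X_{n-1}\times E\times E$ used in Proposition~\ref{proposition:outer}. The inner Hodge number changes introduced along the way by the auxiliary blow-ups are tracked and compensated by altering the target diamond of $W$; the flexibility granted by the inductive form of Theorem~\ref{theorem:main} in dimension $n-2$ makes this compensation possible, after which the final blow-up along $W$ installs the prescribed inner Hodge numbers modulo $m$.
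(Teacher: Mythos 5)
Your strategy---install the required increments on the inner Hodge numbers by blowing up along a codimension-$2$ center $W$ whose Hodge diamond realizes those increments modulo $m$---founders on exactly the step you flag yourself, and that step is the entire content of the theorem. An abstract $(n-2)$-dimensional $W$ with prescribed Hodge numbers modulo $m$ cannot in general be embedded in codimension $2$ into an iterated blow-up of $X$: the only ``new'' subvarieties such blow-ups supply are projective bundles over the chosen centers and their subvarieties, whereas the varieties produced by Theorem~\ref{theorem:main} in dimension $n-2$ are built from iterated hypersurfaces in products with $E\times E$ and have no reason to sit inside any such bundle. Your fallback---constructing $W$ \emph{in situ} as an iterated hypersurface or complete intersection inside a blow-up of $X$---is precisely what would have to be proved, and you give no argument; note that complete intersections in projective bundles over projective spaces have very rigid Hodge numbers (e.g.\ their $h^{a,0}$ are far from arbitrary modulo $m$), so the na\"ive version of this plan fails. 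There is also a smaller overlooked point: Theorem~\ref{theorem:main} only produces connected $W$ with $h^{0,0}(W)=1$, so a single codimension-$2$ blow-up always adds exactly $1$ to $h^{1,1}$, and the increment $\Delta^{1,1}$ cannot be prescribed arbitrarily by one such blow-up.

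The paper sidesteps the embedding problem by never attempting to realize the whole increment diamond with a single center. Lemma~\ref{lemma:subvariety} shows that after blowing up a point and then a linear $\PP^s$ inside the resulting exceptional $\PP^{n-1}$---repeated $m$ times so that the net effect on all Hodge numbers vanishes modulo $m$---one may assume $X$ contains many disjoint projective bundles of rank $r$ over $\PP^{s}$; these in turn contain projective bundles $B_d$ over hypersurfaces $Y_d\subset\PP^{s-r+1}$ of any degree $d$, which are therefore always available as blow-up centers. Passing to the primitive Hodge numbers $l^{p,q}=h^{p,q}-h^{p-1,q-1}$ and ordering the index set by weight and then by $|p-q|$, the combination of one blow-up along $B_{s-r+2}$ with $m-1$ blow-ups along $B_1$ increments $l^{r,s}$ by $1$ modulo $m$ while leaving all later primitive Hodge numbers unchanged modulo $m$ (Proposition~\ref{proposition:incr}); a descending greedy induction then concludes. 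To salvage your approach you would need to replace the single center $W$ by such an explicit supply of embeddable centers together with a triangularity argument; as written, the proposal is not a proof.
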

Together with Proposition~\ref{proposition:outer}, this will complete the proof of Theorem~\ref{theorem:main}.

Let us recall the following result on blow-ups, see e.\,g.\ \cite[Theorem~7.31]{voisin}:
If $\widetilde X$ denotes the blow-up of a Kähler manifold $X$ along a closed submanifold $Z\subset X$ of codimension~$c$,
we have
\[ H^{p,q}(\widetilde X) \cong H^{p,q}(X) \oplus \bigoplus_{i=1}^{c-1}H^{p-i,q-i}(Z) \;. \]
Therefore,
\begin{equation}
h^{p,q}(\widetilde X) = h^{p,q}(X) + \sum_{i=1}^{c-1}h^{p-i,q-i}(Z) \;. \label{eq:blowup}
\end{equation}

In order to prove Theorem~\ref{theorem:inner}, we first show that we may assume that $X$ contains certain subvarieties,
without modifying its Hodge numbers modulo~$m$.

\begin{lemma}\label{lemma:subvariety}
Let $X$ be a smooth complex projective variety of dimension~$n$.
Let $r,s\ge0$ be integers such that $r+s\le n-1$.
Then $X$ is birational to a smooth complex projective variety $X'$ of dimension~$n$
such that $h^{p,q}(X')\equiv h^{p,q}(X)\pmod m$ for all $0\le p,q\le n$ and such that $X'$ contains at least $m$ disjoint smooth closed subvarieties that are all isomorphic to a projective bundle of rank~$r$ over $\PP^s$.
\end{lemma}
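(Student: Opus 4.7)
My plan is to construct $X'$ from $X$ by at most two successive blow-ups, each performed along $m$ disjoint smooth centres that are pairwise isomorphic and of the same codimension. By the blow-up formula \eqref{eq:blowup}, every such blow-up changes each Hodge number by $m$ times the contribution of a single centre, so all $h^{p,q}$ are preserved modulo $m$ throughout the process. The desired $m$ subvarieties of $X'$ will appear as proper transforms of certain auxiliary linear subspaces created along the way.

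First I would blow up $X$ at $m$ distinct points, producing $\widetilde X$ with disjoint exceptional divisors $E_1,\dots,E_m\cong\PP^{n-1}$. Inside each $E_i$ I then choose a linear subspace $\Lambda_i\cong\PP^{r+s}$; this fits because $r+s\le n-1=\dim E_i$. If $r=0$ I simply take $X':=\widetilde X$, since the $\Lambda_i=\PP^s$ already provide $m$ disjoint smooth closed subvarieties of the required type.

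If instead $r\ge 1$, I additionally fix a linear $\PP^{r-1}_i$ inside each $\Lambda_i$ and blow up $\widetilde X$ along $\bigsqcup_{i=1}^m\PP^{r-1}_i$ to obtain $X'$. Because these centres are disjoint, all isomorphic to $\PP^{r-1}$, and of common codimension $n-r+1$, the same bookkeeping shows $h^{p,q}(X')\equiv h^{p,q}(\widetilde X)\pmod m$. The proper transforms $\widetilde\Lambda_1,\dots,\widetilde\Lambda_m$ of the $\Lambda_i$ in $X'$ are then pairwise disjoint smooth closed subvarieties, each isomorphic to the blow-up of $\PP^{r+s}$ along a linear $\PP^{r-1}$.

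The only nontrivial geometric ingredient, and the step I expect to be the main one to articulate, is the classical identification of the blow-up of $\PP^{r+s}$ along a linear $\PP^{r-1}$ with a $\PP^r$-bundle over $\PP^s$. This comes from resolving the linear projection $\PP^{r+s}\dashrightarrow\PP^s$ away from $\PP^{r-1}$, whose closed fibres are exactly the linear $\PP^r$'s through $\PP^{r-1}$. Granting this, each $\widetilde\Lambda_i$ carries the structure of a $\PP^r$-bundle over $\PP^s$, completing the construction.
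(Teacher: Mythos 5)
Your construction is correct, and it reaches the same goal by a genuinely different second blow-up. The paper also starts by blowing up points to create exceptional divisors $E\cong\PP^{n-1}$, but it then blows up a linearly embedded $\PP^s\subset E$ and locates the desired bundle \emph{inside the new exceptional divisor}: since $\PP^s$ sits in a smooth $(r+s+1)$-dimensional subvariety, its normal bundle in the blown-up $X$ contains a rank-$(r+1)$ subbundle, whose projectivization is a $\PP^r$-subbundle of the exceptional divisor over $\PP^s$. You instead blow up a linear $\PP^{r-1}\subset\PP^{r+s}\subset E$ and take the \emph{strict transform} of the $\PP^{r+s}$, invoking the classical identification of $\mathrm{Bl}_{\PP^{r-1}}\PP^{r+s}$ with a $\PP^r$-bundle over $\PP^s$ via the linear projection; the strict transform is indeed this blow-up (the standard fact that the strict transform of a subvariety containing the smooth centre is its blow-up along that centre), so it is a smooth closed subvariety of the required type. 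Your key input is thus an explicit classical isomorphism rather than the existence of a subbundle of a normal bundle, which makes the resulting bundle completely concrete (it is $\PP(\OO_{\PP^s}^{\oplus r}\oplus\OO_{\PP^s}(1))$); the paper's version is marginally shorter because the bundle appears directly as part of the exceptional divisor and no strict transforms need to be analysed. The mod-$m$ bookkeeping is identical in both arguments: all centres at each stage are pairwise isomorphic, disjoint and of equal codimension, so by the blow-up formula \eqref{eq:blowup} each Hodge number changes by $m$ times a constant. Your separate treatment of $r=0$ (where no second blow-up is needed) is a harmless simplification; note also that for $s=0$ your second centre is a divisor in $\Lambda_i$ but still has codimension $\ge 2$ in the ambient variety, so the argument goes through verbatim.
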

\begin{proof}
We first blow up $X$ in a point and denote the result by $\widetilde X$.
The exceptional divisor is a subvariety in $\widetilde X$ isomorphic to $\PP^{n-1}$.
In particular, $\widetilde X$ contains a copy of $\PP^s\subset\PP^{n-1}$. Now we blow up $\widetilde X$ along $\PP^s$ to obtain $\widehat X$.
The exceptional divisor in $\widehat X$ is the projectivization of the normal bundle of $\PP^s$ in $\widetilde X$.
Since $\PP^s$ is contained in a smooth closed subvariety of dimension~$r+s+1$ in $\widetilde X$ (choose either $\PP^{r+s+1}\subset\PP^{n-1}$ if $r+s<n-1$ or $\widetilde X$ if $r+s=n-1$),
the normal bundle of $\PP^s$ in $\widetilde X$ contains a vector subbundle of rank~$r+1$, and hence its projectivization contains a projective subbundle of rank~$r$.
Therefore, $\widehat X$ admits a subvariety isomorphic to the total space of a projective bundle of rank~$r$ over $\PP^s$.

By \eqref{eq:blowup}, the above construction only has an additive effect on the Hodge diamond,
i.\,e.\ the differences between respective Hodge numbers of $\widehat X$ and $X$ are constants independent of $X$.
Hence, we may apply the above construction $m-1$ more times to obtain a smooth complex projective variety $X'$ containing $m$ disjoint copies of
the desired projective bundle and satisfying $h^{p,q}(X')\equiv h^{p,q}(X)\pmod m$.
\end{proof}

In the following, we consider the primitive Hodge numbers \[ l^{p,q}(X)=h^{p,q}(X)-h^{p-1,q-1}(X) \] for $p+q\le n$.
Clearly, it suffices to show Theorem~\ref{theorem:inner} for a given collection $(l^{p,q})_{(p,q)\in I}$ of primitive Hodge numbers instead,
where \[ I = \{(p,q) \mid 1\le p\le q\le n-1\text{ and }p+q\le n \} \;. \]
This is because one can get back the original Hodge numbers from the primitive Hodge numbers via the relation
\[ h^{p,q}(X) = h^{0,q-p}(X) + \sum_{i=1}^p l^{i,q-p+i}(X) \]
for $p\le q$ and $p+q\le n$, and $h^{0,q-p}(X)$ is a birational invariant.

We define a total order $\prec$ on $I$ via
\[ (r,s)\prec(p,q) \iff r+s<p+q \text{ or }(r+s=p+q \text{ and }s<q) \;. \]

\begin{proposition}\label{proposition:incr}
Let $X$ be a smooth complex projective variety of dimension~$n$.
Let $(r,s)\in I$.
Then $X$ is birational to a smooth complex projective variety $X'$ of dimension~$n$
such that \[ l^{r,s}(X')\equiv l^{r,s}(X)+1\pmod m \] and \[ l^{p,q}(X')\equiv l^{p,q}(X)\pmod m \] for all $(p,q)\in I$ with $(r,s)\prec(p,q)$.
\end{proposition}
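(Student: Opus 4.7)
The plan is to realize $X'$ as an iterated blow-up of $X$ along carefully chosen smooth subvarieties. Translating \eqref{eq:blowup} into primitive Hodge numbers, blowing $X$ up along a smooth subvariety $Y$ of codimension $c$ changes them by
\[
\Delta l^{p,q} \;=\; h^{p-1,q-1}(Y) - h^{p-c,q-c}(Y).
\]
The aim is to arrange the cumulative change across all chosen blow-ups to be $\equiv 1\pmod m$ at $(p,q)=(r,s)$ and $\equiv 0\pmod m$ for every $(p,q)\in I$ succeeding $(r,s)$ in $\prec$. I would proceed by induction on $\prec$, handling the maximum of $I$ first.

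The diagonal case $r=s$ is the cleanest. I would take $Y=\PP^{r-1}$, which is a $\PP^0$-bundle over $\PP^{r-1}$ and is therefore supplied as a subvariety of a birational model of $X$ by Lemma~\ref{lemma:subvariety}. The codimension is $c=n-r+1$, and any $(p,p)\in I$ satisfies $p\le n/2<c$, so the subtracted term vanishes. The formula reduces to $\Delta l^{p,p}=[1\le p\le r]$, and since $\PP^{r-1}$ has diagonal Hodge structure one has $\Delta l^{p,q}=0$ for $p\ne q$. In particular $\Delta l^{r,r}=1$ and no primitive Hodge number above $(r,r)$ is disturbed.

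For the off-diagonal case $r<s$, I would take the primary blow-up center to be $Y_0=\PP^{r-1}\times W$, where $W$ is a smooth projective variety of dimension $s-r$ with $h^{s-r,0}(W)=1$ and an otherwise sparse Hodge diamond (an elliptic curve for $s-r=1$, a K3 surface for $s-r=2$, a suitable Calabi--Yau-type variety for larger $s-r$). An iterated version of the construction used in the proof of Lemma~\ref{lemma:subvariety} embeds $Y_0$ into a birational model of $X$: blow $X$ up at a point, embed $Y_0$ into the resulting exceptional $\PP^{n-1}$ via a Segre-type projective embedding, and blow up along this copy. A Künneth computation with $\dim Y_0=s-1$ and $c=n-s+1$ gives $h^{r-1,s-1}(Y_0)=h^{0,s-r}(W)=1$, while the inequality $r+s\le n$ forces $r-c<0$, killing the subtracted term, so $\Delta l^{r,s}=1$.

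The main obstacle is the collateral damage at positions $(p,q)\succ(r,s)$ created by the other Hodge numbers of $Y_0$. These contributions are cancelled by descending induction on $\prec$: for each offending $(p,q)\succ(r,s)$, handled in decreasing $\prec$-order starting from the largest, apply the proposition inductively at $(p,q)$ the appropriate number of times to bring $\Delta l^{p,q}$ back to $0\pmod m$; by hypothesis such a correction affects only $l^{p',q'}$ with $(p',q')\preceq (p,q)$, so no correction undoes a previously-handled position. Such corrections may nevertheless perturb $l^{r,s}$, so one must combine everything into a triangular linear system modulo~$m$ and repeat the primary blow-up (along further disjoint copies of $Y_0$, arranged via Lemma~\ref{lemma:subvariety}) enough times to solve for a total increment $\equiv 1\pmod m$ at $(r,s)$. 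The delicate point is that the ``primary coefficient'' in this triangular system is coprime to $m$, which is guaranteed because our construction gives exactly $\Delta l^{r,s}=1$ per primary blow-up; and that $W$ must be chosen sparse enough for every collateral contribution at positions above $(r,s)$ to be realizable as a target of an inductive correction.
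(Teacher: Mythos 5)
Your overall strategy -- blow up along a $\PP^{r-1}$-bundle over an $(s-r)$-dimensional variety $W$ with $h^{0,s-r}(W)=1$, supplied by (an adaptation of) Lemma~\ref{lemma:subvariety} -- is the same as the paper's, and your diagonal case and the computation $\Delta l^{r,s}=h^{r-1,s-1}(\PP^{r-1}\times W)=1$ with the codimension term vanishing are correct. The genuine gap is in how you dispose of the collateral damage at positions $(p,q)\succ(r,s)$. Your correction scheme is safe only as long as every such damaged position can be corrected without touching $l^{r,s}$; this holds for \emph{diagonal} positions (blowing up a $\PP^{p-1}$ changes no off-diagonal primitive Hodge number), but fails as soon as $W$ produces \emph{off-diagonal} damage above the antidiagonal $p+q=r+s$. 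A Hodge number $h^{a,b}(W)\neq 0$ with $a\neq b$ and $a+b>s-r$ contributes to some $l^{p,q}$ with $p\neq q$ and $p+q>r+s$, and your "Calabi--Yau-type" $W$ does not exclude this: e.g.\ for $s-r=4$ a Calabi--Yau fourfold may have $h^{1,2}\neq 0$, hence $h^{2,3}\neq 0$, hitting the off-diagonal position $(r+2,s-1)$ with $p+q=r+s+1$. Once such a position must be corrected by an inductive application of the proposition, that correction perturbs $l^{r,s}$ by an amount you do not control. The resulting linear system is \emph{not} triangular: writing the net effect of one primary blow-up plus its cascade of corrections on $l^{r,s}$ as $1+c$, repeating the primary move $a$ times yields a total increment $a(1+c)$, and you need $a(1+c)\equiv 1\pmod m$, which is unsolvable when $\gcd(1+c,m)>1$. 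Your "delicate point" misidentifies the relevant coefficient: it is $1+c$, not the per-blow-up increment $1$, that must be a unit mod $m$, and nothing in your construction guarantees this.

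The gap is closable, and the closure is exactly what the paper does: one must choose $W$ so that \emph{all} of its off-diagonal Hodge numbers outside the middle antidiagonal vanish, i.e.\ so that its Hodge diamond is that of $\PP^{s-r}$ plus a middle row. The paper takes $W=Y_d$ a degree-$d$ hypersurface in $\PP^{s-r+1}$ with $d=s-r+2$, so that the outer entries of the middle row equal $\binom{d-1}{s-r+1}=1$; then the only damage at positions $\succ(r,s)$ comes from the diagonal $\PP^{s-r}$-part, and the paper eliminates even that without any induction by blowing up $m-1$ additional times along $B_1$ (a $\PP^{r-1}$-bundle over $Y_1\cong\PP^{s-r}$), so that the diagonal contributions sum to $0$ modulo $m$ and the net effect is that of a formal center $Z\times\PP^{r-1}$ with $Z$ concentrated in the middle row. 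With that choice of $W$ your inductive correction of the residual diagonal damage would also work (those corrections never touch the off-diagonal $l^{r,s}$), but as written -- with an unspecified "sparse" $W$ and a fallback argument whose key coefficient is uncontrolled -- the off-diagonal case of your proof does not go through. A secondary point: a Segre embedding of $\PP^{r-1}\times W$ into the exceptional $\PP^{n-1}$ need not exist for dimension reasons; one should instead realize the $\PP^{r-1}$-bundle inside the projectivized normal bundle of a second blow-up, as in Lemma~\ref{lemma:subvariety}, and invoke \cite[Lemma~7.32]{voisin} since that bundle need not be trivial.
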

\begin{proof}
By Lemma~\ref{lemma:subvariety}, we may assume that $X$ contains $m$ disjoint copies of a projective bundle of rank~$r-1$ over $\PP^{s-r+1}$.
Therefore, it is possible to blow up $X$ along a projective bundle $B_d$ of rank~$r-1$ over smooth hypersurfaces $Y_d\subset\PP^{s-r+1}$ of degree~$d$ (in case of $r=s$, $Y_d$ just consists of $d$ distinct points in $\PP^1$) and we may repeat this procedure $m$~times and with different values for $d$.
The Hodge numbers of $B_d$ are the same as for the trivial bundle $Y_d\times\PP^{r-1}$, see e.\,g.\ \cite[Lemma~7.32]{voisin}.

By the Lefschetz hyperplane theorem, the Hodge diamond of $Y_d$ is the sum of the Hodge diamond of $Y_1\cong\PP^{s-r}$, having non-zero entries
only in the middle column, and of a Hodge diamond depending on $d$, having non-zero entries only in the middle row.
It is well known (e.\,g.\ by computing Euler characteristics as in Section~\ref{section:outer}) that the two outer entries of this middle row are precisely $\binom{d-1}{s-r+1}$.

Now we blow up $X$ once along $B_{s-r+2}$ and $m-1$~times along $B_1$ and denote the resulting smooth complex projective variety by $X'$.
Due to \eqref{eq:blowup} and Künneth's formula, this construction affects the Hodge numbers modulo~$m$ in the same way as
if we would blow up a single subvariety $Z\times\PP^{r-1}\subset X$, where $Z$ is a (formal) $(s-r)$-dimensional Kähler manifold
whose Hodge diamond is concentrated in the middle row and has outer entries equal to $\binom{s-r+2-1}{s-r+1}=1$.
In particular, we have $h^{p,q}(Z\times\PP^{r-1})=0$ unless $s-r\le p+q\le s+r-2$ (and $p+q$ has the same parity as $s-r$) and $|p-q|\le s-r$.
On the other hand, $h^{p,q}(Z\times\PP^{r-1})=1$ if $s-r\le p+q\le s+r-2$ and $|p-q|=s-r$.

Taking differences in \eqref{eq:blowup}, it follows that
\[ l^{p,q}(X') \equiv l^{p,q}(X) + h^{p-1,q-1}(Z\times\PP^{r-1}) - h^{p-n+s-1,q-n+s-1}(Z\times\PP^{r-1}) \pmod m\]
for all $p+q\le n$. But we have \[ (p-n+s-1)+(q-n+s-1) = p+q-2n+2s-2 \le 2s-n-2 \le s-r-2 \] and hence $h^{p-n+s-1,q-n+s-1}(Z\times\PP^{r-1})=0$
for all $(p,q)\in I$ by the above remark.

Further,
\[ l^{r,s}(X') \equiv l^{r,s}(X) + h^{r-1,s-1}(Z\times\PP^{r-1}) = l^{r,s}(X) + 1 \pmod m \]
since $s-r\le(r-1)+(s-1)\le s+r-2$ and $|r-s|=s-r$.

Finally, $r+s<p+q$ implies $(p-1)+(q-1)>s+r-2$, while $r+s=p+q$ and $s<q$ imply $|p-q|>s-r$, so we have $h^{p-1,q-1}(Z\times\PP^{r-1})=0$ in both cases and thus
\[ l^{p,q}(X') \equiv l^{p,q}(X) + h^{p-1,q-1}(Z\times\PP^{r-1}) = l^{p,q}(X) \pmod m \]
for all $(p,q)\in I$ with $(r,s)\prec(p,q)$.
\end{proof}

\begin{proof}[Proof of Theorem~\ref{theorem:inner}]
The statement is an immediate consequence of applying Proposition~\ref{proposition:incr} inductively $t_{p,q}$~times to each $(p,q)\in I$ in the descending order induced by $\prec$,
where $t_{p,q}\equiv l^{p,q}-l^{p,q}(X_{p,q})\pmod m$ and $X_{p,q}$ is the variety obtained in the previous step.
\end{proof}

\section{Polynomial relations}\label{section:poly}

The following principle seems to be classical.
\begin{lemma}\label{lemma:modulo}
Let $N\ge1$ and let $S\subset\ZZ^N$ be a subset such that its reduction modulo~$m$ is the whole of $(\ZZ/m\ZZ)^N$
for infinitely many integers $m\ge2$. If $f\in\CC[x_1,\ldots,x_N]$ is a polynomial vanishing on $S$, then $f=0$.
\end{lemma}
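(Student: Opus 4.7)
The plan is to reduce the problem to polynomials with integer coefficients and then to exploit the fact that a nonzero integer cannot be divisible by infinitely many distinct $m \ge 2$.

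First I would show that it suffices to prove the statement for $f \in \ZZ[x_1,\ldots,x_N]$. For any fixed degree $d$, the set of polynomials in $\CC[x_1,\ldots,x_N]$ of degree at most $d$ that vanish on $S$ is a $\CC$-subspace cut out by the $\ZZ$-linear equations $\sum_\alpha c_\alpha s^\alpha = 0$, one for each $s \in S$, in the unknown coefficients $c_\alpha$. Since the dimension of the solution space of a homogeneous linear system with $\ZZ$-coefficients is the same over $\ZZ$, $\QQ$, and $\CC$, the existence of a nonzero $f \in \CC[x_1,\ldots,x_N]$ vanishing on $S$ guarantees the existence of a nonzero such $f$ with integer coefficients.

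Assuming now $f \in \ZZ[x_1,\ldots,x_N]$, fix any $b \in \ZZ^N$ and any $m$ satisfying the hypothesis. By assumption there exists $s \in S$ with $s \equiv b \pmod m$, and since $f$ has integer coefficients, $f(b) \equiv f(s) = 0 \pmod m$. Ranging over the infinitely many admissible $m$, the single integer $f(b)$ becomes divisible by arbitrarily large positive integers, so $f(b) = 0$. Hence $f$ vanishes on all of $\ZZ^N$, which forces $f = 0$ by the standard fact, proved by induction on $N$ (the base case being that a univariate polynomial with infinitely many zeros is zero), that any polynomial in $\CC[x_1,\ldots,x_N]$ vanishing on $\ZZ^N$ is identically zero. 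No step is a genuine obstacle; the only place requiring a moment's thought is the passage from $\CC$- to $\ZZ$-coefficients, and even that is a routine consequence of the $\ZZ$-rationality of the defining linear constraints.
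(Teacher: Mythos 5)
Your proof is correct and follows essentially the same route as the paper: reduce to a nonzero polynomial with integer coefficients vanishing on $S$, then use that $f(b)\equiv f(s)\equiv 0\pmod m$ for the infinitely many admissible $m$ to force vanishing on $\ZZ^N$. The only cosmetic difference is the descent from $\CC$ to $\ZZ$: the paper applies a $\QQ$-linear projection $\CC\to\QQ$ sending a nonzero coefficient of $f$ to $1$, whereas you invoke the equality of the dimensions of the solution space of the defining $\ZZ$-linear system over $\QQ$ and over $\CC$; both are valid.
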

\begin{proof}
Let $f\in\CC[x_1,\ldots,x_N]$ be a non-zero polynomial vanishing on $S$.
By choosing a $\QQ$-basis of $\CC$ and a $\QQ$-linear projection $\CC\to\QQ$ which sends a non-zero coefficient of $f$ to $1$,
we see that we may assume that the coefficients of $f$ are rational, hence even integral.
Since $f\ne0$, there exists a point $z\in\ZZ^N$ such that $f(z)\ne0$.
Choose an integer $m\ge2$ from the assumption which does not divide $f(z)$.
Then $f(z)\not\equiv0\pmod m$. However, we have $z\equiv s\pmod m$ for some $s\in S$ and thus $f(z)\equiv f(s)=0\pmod m$,
because $f\in\ZZ[x_1,\ldots,x_N]$. This is a contradiction.
\end{proof}

\begin{proof}[Proof of Corollary~\ref{corollary:poly}]
This follows by applying Lemma~\ref{lemma:modulo} to the set $S$ of possible Hodge diamonds,
where we consider only a non-redundant quarter of the diamond to take the Hodge symmetries into account.
Theorem~\ref{theorem:main} guarantees that the reductions of $S$ modulo~$m$ are surjective
even for all integers $m\ge2$.
\end{proof}

In the same way Theorem~\ref{theorem:main} implies Corollary \ref{corollary:poly}, Theorem~\ref{theorem:inner} yields the following.

\begin{corollary}\label{corollary:poly:birational}
There are no non-trivial polynomial relations among the inner Hodge numbers of all smooth complex projective varieties in any given birational equivalence class.
\end{corollary}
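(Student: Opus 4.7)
My plan is to mimic the proof of Corollary~\ref{corollary:poly} almost verbatim, with Theorem~\ref{theorem:inner} playing the role of Theorem~\ref{theorem:main}. Fix a birational equivalence class in dimension~$n$, represented by a smooth complex projective variety~$X$. Choose a fundamental domain $J \subset \{(p,q) \mid 1\le p,q\le n-1\}$ for the action of the Hodge symmetries $h^{p,q}=h^{q,p}=h^{n-p,n-q}$ on the inner part of the Hodge diamond, and set $N=|J|$. Let $S\subset\ZZ^N$ be the set of all tuples $(h^{p,q}(X'))_{(p,q)\in J}$ arising from smooth complex projective varieties $X'$ that are birational to~$X$.

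The key input is that for every integer $m\ge2$ and every collection $(h^{p,q})_{1\le p,q\le n-1}$ of integers satisfying the Hodge symmetries, Theorem~\ref{theorem:inner} produces an $X'$ birational to $X$ whose inner Hodge numbers realize this collection modulo~$m$. In particular, the reduction of $S$ modulo~$m$ is all of $(\ZZ/m\ZZ)^N$ for every $m\ge2$. Lemma~\ref{lemma:modulo} (with $S$ as above) then forces any polynomial $f\in\CC[x_{(p,q)} : (p,q)\in J]$ that vanishes on all such inner Hodge tuples to be identically zero, which is exactly the content of Corollary~\ref{corollary:poly:birational}.

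There is essentially no obstacle beyond this formal deduction, since the heavy lifting has already been done in Theorem~\ref{theorem:inner}. The only bookkeeping is the choice of $J$: by restricting to a non-redundant quarter of the inner diamond we quotient out the Hodge symmetries, so that the coordinates indexed by $J$ may be prescribed independently and the conclusion "no non-trivial polynomial relations" is read off directly.
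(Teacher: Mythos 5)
Your proposal is correct and matches the paper's intent exactly: the paper proves this corollary by simply noting that Theorem~\ref{theorem:inner} implies it in the same way that Theorem~\ref{theorem:main} implies Corollary~\ref{corollary:poly}, i.e.\ by applying Lemma~\ref{lemma:modulo} to the set of inner Hodge tuples (restricted to a non-redundant quarter) realized within the fixed birational class, whose reductions modulo every $m\ge2$ are surjective. Your bookkeeping with the fundamental domain $J$ is precisely the ``non-redundant quarter'' the paper alludes to.
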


\section*{Acknowledgements}
The second author thanks  J.\ Koll\'ar and D.\ Kotschick for independently making him aware of Koll\'ar's question (answered in Corollary~\ref{corollary:poly} above), which was the starting point of this paper.
The authors are grateful to the referees for useful suggestions.
This work is supported by the DFG project ``Topologische Eigenschaften von algebraischen Varietäten'' (project no.\ 416054549).

\bibliographystyle{myamsalpha}
\bibliography{biblio}

\end{document}